%
\documentclass[runningheads]{llncs}

\usepackage{amsmath}
\usepackage{amssymb}

\usepackage{graphicx}

\usepackage{enumerate}
%

\usepackage[hyphens]{url}

\usepackage{hyperref}
\hypersetup{colorlinks=true}
\usepackage[nameinlink]{cleveref}

\begin{document}
\title{Kernel Density Estimation Bias under Minimal Assumptions}
%
%
\author{Maciej Skorski \inst{1}}
\authorrunning{M. Skorski}
%
\institute{DELL\\
\email{maciej.skorski@gmail.com}}
\maketitle              
\begin{abstract}
Kernel Density Estimation is a very popular technique of approximating a density function from samples. The accuracy is generally well-understood and depends, roughly speaking, on the kernel decay and local smoothness of the true density. However concrete statements in the literature are often invoked in very specific settings (simplified or overly conservative assumptions) or miss important but subtle points (e.g. it is common to heuristically apply Taylor's expansion globally without referring to compactness). 

The contribution of this paper is twofold
\begin{enumerate}[(a)]
\item we demonstrate that it is necessary to keep a certain balance between the
\emph{kernel decay} and \emph{magnitudes of bandwidth eigenvalues}; otherwise, regardless of kernel smoothness and moments (!), the estimates are not bounded.
\item we give a rigorous derivation of bounds with explicit constants for the bias, under possibly minimal assumptions. This connects the kernel decay, bandwidth norm, bandwidth determinant and (local) density smoothness.
\end{enumerate}
It has been folklore that the issue with Taylor's formula can be fixed with more complicated assumptions on the density (for example p. 95 of "Kernel Smoothing" by Wand and Jones); we show that 
this is actually not necessary and can be handled by the kernel decay alone.
\keywords{Statistical Learning, Kernel Density Estimation }
\end{abstract}
\section{Introduction}

\subsection{Kernel Density Estimation}
\paragraph{Density estimation by convolutions}
Density estimation is the fundamental problem of approximating a probability density function
$f:\mathbb{R}^d\rightarrow \mathbb{R}$ given a set of $n$ iid samples $\mathcal{D}\subset \mathbb{R}^d$. The popular approach, called \emph{Kernel Density Estimation}, uses
a convolution of a suitable filter $K_h$ (called \emph{kernel}) with the sample distribution  $f_{\mathcal{D}} = |\mathcal{D}|^{-1}\sum_{\mathbf{x}\in\mathcal{D}}\delta_{\mathbf{x}}$. Formally, the KDE estimator is defined by
\begin{align}\label{eq:kde}
\hat{f}(\mathbf{x}') = K\star f_{\mathcal{D}}(\mathbf{x}')  = \int K_h(\mathbf{x}'-\mathbf{x}) f_{\mathcal{D}}(\mathbf{x})\,\mbox{d} \mathbf{x}
\end{align}
and in this form is credited to Rosenblatt and Parzen~\cite{RosenblattM1956,parzen1962}. Usually one uses rescaled versions of a base kernel 
\begin{align}\label{eq:scale}
K_h(\mathbf{u}) = |h|^{-1} \cdot K(h^{-1}\mathbf{u})
\end{align}
where the scale parameter $h$ is a $d\times d$ invertible matrix  called \emph{bandwidth} and $|h|$ is the matrix determinant (for simplicity one often considers diagonal $h$).
Under certain assumptions on the kernel (rapid decay, moments) and the density (smoothness),
the KDE estimator is consistent asymptotically, that is when $h\to 0$. Intuitively, the convergence follows as for $\mathbf{x}'$ close to $\mathbf{x}$ we have $\mathbf{f}(\mathbf{x})\approx \mathbf{f}(\mathbf{x'})$ by the smoothness of $f$, and for larger $\mathbf{x}$ the possible bias is penalized by the scaled kernel as $|h^{-1}(\mathbf{x}'-\mathbf{x})|$ is big for small $h$. Specific bounds depends on the kernel and local smoothnes of $f$.

\paragraph{Estimator Accuracy}
The variance of the estimator is quite easy to compute
\begin{align}
\mathsf{Var}(\hat{f}(\mathbf{x'})) = |\mathcal{D}|^{-1}\cdot\mathsf{Var}_{\mathbf{x}\sim f}\left[K_h(\mathbf{x}'-\mathbf{x})\right]
\end{align}
and (under some assumptions on $f$) is of order $O(|\mathcal{D}|^{-1}\|h\|^{-d})$ with the hidden constant dependent on $K$. In turn, bias is obtained by exchanging expectation and the convolution integral
\begin{align}
\mathsf{bias}(\hat{f}(\mathbf{x}')) = \mathbb{E}_{\mathcal{D}}\left[K\star f_{\mathcal{D}}(\mathbf{x}')-f(\mathbf{x}')\right] = K_h\star f(\mathbf{x}')-f(\mathbf{x}') 
\end{align}
Intuitively, it captures by how much the convolution perturbs the density; this in turn depends on how the kernel interacts with the local series expansion of $f$. Expanding $f$ around $\mathbf{x}'$ and parametrizing $\mathbf{x}=\mathbf{x}'-h\mathbf{u}$
one obtains a series $\mathsf{bias}(\hat{f}(\mathbf{x}'))=\sum_{k}\mu_k(\mathbf{x}',h)$ where
\begin{align}\label{eq:moments}
\mu_k(\mathbf{x}',h) = \left\{\begin{array}{rl}
 f(\mathbf{x}')\cdot (\int K(\mathbf{u})-1)\mbox{d}\mathbf{u} & \quad k=0 \\
     \frac{(-1)^k}{k!} \int K(\mathbf{u}) D^{k} f(\mathbf{x}')((h\mathbf{u})^{(k)})\,\mbox{d}\mathbf{u} & \quad k=1,2,\ldots
\end{array}\right.
\end{align}
the $k$-th derivative $D^k$ is understood as a $k$-linear map
from $(\mathbb{R}^d)^k$ to $\mathbb{R}^d$ and $\mathbf{v}^{(k)}$ denotes the vector $\mathbf{v}$ stacked $k$-times; here one needs to some assumptions on
the kernel $K$ and derivatives $D^j$ to guarantee that the integrals exist. 

In general $\mu_k(\mathbf{x}',h) = O(\mu_K(k) \|h\|^k) = O(\|h\|^k)$, so one designs the filter to eliminate low-order terms: 
\begin{enumerate}[(a)]
\item (unit mass) when $\int K(\mathbf{u}) = 1$, the bias is of order $O(\|h\|)$
\item (symmetry) if in addition $K(\mathbf{u})=K(\mathbf{-u})$, the bias improves to $O(\|h\|^2)$\footnote{$\mu_1$ is a weighted sum of terms $\int\ K(\mathbf{u})\mathbf{u}_i\,\mbox{d}\mathbf{u}$, which are zero when $K$ is symmetric.}.
\end{enumerate}
The best, over the choice of $h$, MSE error equals then (pointwise, for fixed $\mathbf{x}'$)
 \begin{align}
     \mathsf{MSE} = O\left(n^{-\frac{4}{4+d}}\right),\quad \|h\| = \Theta\left(n^{-\frac{1}{4+d}}\right)
 \end{align}
This improves upon histograms (they have error $O\left(n^{-\frac{2}{2+d}}\right)$). Cacoullos~\cite{multi_KDE}
gives a rigorous derivation of the bias and variance formulas for diagonal $h$.

\paragraph{Better accuracy with higher-order kernels}
One can farther reduce bias by eliminating more of the expansion terms. Such kernels are also called \emph{higher-order kernels}
and compensate the negative impact of dimension $d$ on the variance (curse of dimensionality).
If the property $\mu_k=0$ holds for $k=1,\ldots,v-1$ one says the kernel is of order $v$; the bias is of order $\|h\|^{v}$ which (for the optimized bandwith) gives the mse error of order $O\left(n^{-\frac{2v}{2v+d}}\right)$~\cite{KDE_lectures2}.
Higher-order kernels can be built as products of single-dimension higher-order kernels; the problem of developing one-dimensional filters from Taylor expansions was studied in~\cite{Moller1997}.

\subsection{Contribution of this paper}

The
fundamental properties of kernel estimators, including bias and variance, are generally well understood. However the concrete statements in the existing literature are based on various assumptions; sometimes they are overly simplistic, sometimes too conservative, and finally sometimes important assumptions are ignored. We mention few prominent examples, to be specific: 
\begin{itemize}
\item Bandwidth is scalar or diagonal~\cite{multi_KDE}, or 
is given by rescaling a fixed matrix~\cite{pmlr-v70-jiang17b,multi_KDE}
\item For second-order kernels, smoothness of the density
of order $3$ or higher is assumed~\cite{sample_kde1,multi_KDE}
\item Taylor's expansion is used \emph{globaly} which suggest that the kernel decay is not needed~\cite{KDE_lectures,KDE_lectures2} without referring to compactness or taking compact arguments to hold globally~\cite{DUONG2005417}. In fact without sufficient decay the estimates are not even bounded (we will discuss a general example).
\end{itemize}
The purpose of this paper is to give a rigorous bounds on the bias, under minimal constraints on the bandwidth matrix and the kernel decay. Particularly, we discuss what happens when the bandwidth elements goes to zero at different rates.

\section{Results}

\subsection{Necessary kernel decay and bandwidth eigenvalues balance}

The $j$-th moment of the kernel $K$ is defined as
$\mu_K(k) = \int |\mathbf{u}|^j|K(\mathbf{u})|\mbox{d}\mathbf{u}$. The following construction shows
that, to reconstruct the density from its behavior in a fixed neighborhood, the kernel decay and discrepancy of eigenvalues of $h$ must be balanced. This is true \emph{regardless} of smoothness and moments of $K$ (note that bounded moments do not imply decay!).

\begin{theorem}[Lower bound on bias in terms of kernel decay and bandwidth eigenvalues]
For any $\ell \geqslant 0, p\geqslant 0$ there exists a radial kernel $K$ on $\mathbb{R}^d$ which is infinitely differentiable, has finite $\ell$ first moments, and decay rate at infinity not faster than $\|\mathbf{u}\|^{-p}$ with the following property:
over the class of densities $f$ with given behavior on the unit ball
\begin{align*}
\mathcal{F} = \left\{f: f(\mathbf{u}) = f_0(\mathbf{u})\quad \forall \mathbf{u}:|\mathbf{u}|\leqslant 1  \right\},\quad \textrm{ for any } f_0:\ \int_{|\mathbf{u}|\leqslant 1} f_0(\mathbf{u})\mathrm{d}\mathbf{u}<1
\end{align*}
the density estimation at $\mathbf{0}$ is lower-bounded by 
 \begin{align}\label{eq:lower_bound}
 \max_{f\in \mathcal{F}}  \left[ K_h\star f(\mathbf{0}) \right] = \Omega(1)\cdot |\lambda_{1}|^{p}/\prod_{i=1}^d|\lambda_i|
\end{align}
where $\{\lambda_{i}\}_i$ are eigenvalues of $h$ ordered so that $|\lambda_1|\geqslant \ldots \geqslant |\lambda_d|$.
\end{theorem}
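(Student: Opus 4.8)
The plan is to turn the maximisation over $\mathcal F$ into an explicit lower bound by a change of variables, and then to engineer the kernel so that a thin, slowly-decaying ``outer bump'' of $K$ gets amplified by the most stretched eigendirection of $h$. First, since $K$ is radial, $K_h\star f(\mathbf 0)=\int K_h(-\mathbf x)f(\mathbf x)\,\mathrm d\mathbf x=|h|^{-1}\int K(h^{-1}\mathbf x)f(\mathbf x)\,\mathrm d\mathbf x$ with $|h|=\prod_{i=1}^d|\lambda_i|$. Setting $m:=1-\int_{|\mathbf u|\le 1}f_0(\mathbf u)\,\mathrm d\mathbf u>0$, for any nonnegative $\rho$ supported on $\{|\mathbf x|>1\}$ with $\int\rho=m$ the function $f=f_0\,\mathbf 1_{\{|\mathbf x|\le 1\}}+\rho$ is a density agreeing with $f_0$ on the unit ball, hence lies in $\mathcal F$, so (dropping the nonnegative unit-ball term, using that we will take $K\ge 0$)
\[
\max_{f\in\mathcal F}K_h\star f(\mathbf 0)\ \ge\ |h|^{-1}\int K(h^{-1}\mathbf x)\,\rho(\mathbf x)\,\mathrm d\mathbf x .
\]
It will then be enough to construct $K$ and to exhibit one $\rho$ making the right-hand side $\gtrsim |\lambda_1|^{p}/\prod_i|\lambda_i|$ (morally the best $\rho$ concentrates $m$ near the maximiser of $\mathbf x\mapsto K(h^{-1}\mathbf x)$ over $\{|\mathbf x|>1\}$, but we never need the maximum to be attained by an honest density).

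For the kernel I would exploit the remark already made in the paper — bounded moments do not force \emph{uniform} decay — since ``finite $\ell$ moments'' and ``decay no faster than $\|\mathbf u\|^{-p}$'' are otherwise incompatible when $p\le\ell+d$. Concretely, take $K$ radial with profile $g(r)=\sum_{k\ge 0}a_k\,\phi\!\big((r-r_k)/w_k\big)$, where $\phi\in C^\infty_c(-1,1)$ is a fixed bump with $\phi(0)=1$ and $\phi\ge\tfrac12$ on $[-\tfrac12,\tfrac12]$, $r_k=2^k$, $a_k=r_k^{-p}$, and the widths $w_k\downarrow 0$ are small enough that the shells $\{|r-r_k|<w_k\}$ are pairwise disjoint and $\sum_k a_k\,r_k^{\,d-1+\ell}\,w_k<\infty$ (e.g.\ $w_k=2^{-k(|\ell+d-1-p|+2)}$). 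Disjointness makes $g\in C^\infty(0,\infty)$, and since $g\equiv 0$ near the origin $\mathbf u\mapsto g(\|\mathbf u\|)$ is $C^\infty(\mathbb R^d)$; for $j\le\ell$ the $j$-th moment is finite because its tail is $\lesssim\sum_k a_k r_k^{\,d-1+j}w_k<\infty$; and $g(r_k)=r_k^{-p}$ forces $\limsup_{r\to\infty}g(r)\,r^{p}>0$, i.e.\ decay not faster than $\|\mathbf u\|^{-p}$. Rescaling $K$ to have $\int K=1$, or adding a fixed smooth radial central bump, would change none of this.

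Finally, to pick $\rho$: let $\mathbf v_1$ be a unit eigenvector of $h$ for the eigenvalue $\lambda_1$ of largest modulus, and work in the regime of interest $\|h\|\to 0$ (absorbed by the $\Omega(1)$), say $|\lambda_1|\le 1$, so that $\|h^{-1}\|=1/|\lambda_d|$. Let $r_{k^\star}$ be the smallest bump radius $\ge 2/|\lambda_1|$, so $r_{k^\star}\in[\,2/|\lambda_1|,4/|\lambda_1|\,)$, and set $\mathbf x_0:=r_{k^\star}|\lambda_1|\,\mathbf v_1$, so that $|\mathbf x_0|\in[2,4)$ and $\|h^{-1}\mathbf x_0\|=r_{k^\star}$. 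On $B(\mathbf x_0,\epsilon)$ with $\epsilon:=\tfrac12|\lambda_d|\,w_{k^\star}>0$ one has $|\mathbf x|>1$ and $\big|\,\|h^{-1}\mathbf x\|-r_{k^\star}\,\big|\le\|h^{-1}\|\,\epsilon\le\tfrac12 w_{k^\star}$, hence $\|h^{-1}\mathbf x\|$ lies in the $k^\star$-th shell and $K(h^{-1}\mathbf x)=a_{k^\star}\phi(\cdot)\ge\tfrac12 a_{k^\star}\ge\tfrac12\,(4/|\lambda_1|)^{-p}$ there; taking $\rho$ any smooth bump supported in $B(\mathbf x_0,\epsilon)$ with $\int\rho=m$ yields $\max_{f\in\mathcal F}K_h\star f(\mathbf 0)\ge |h|^{-1}\cdot\tfrac12(4/|\lambda_1|)^{-p}\cdot m=\Omega(1)\cdot|\lambda_1|^{p}/\prod_{i=1}^d|\lambda_i|$.

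The hard part is the kernel construction: getting radiality, $C^\infty$ smoothness, finiteness of the first $\ell$ moments, and a prescribed, necessarily non-uniform, decay rate to coexist in one explicit $K$. Once the staircase of disjoint spherical shells is in place, the reduction and the choice of $\rho$ reduce to a change of variables and a volume count.
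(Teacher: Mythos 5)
Your proposal is correct and takes essentially the same route as the paper: a smooth radial kernel assembled from pairwise disjoint thin bumps at radii $r_k$ with heights $r_k^{-p}$ and widths small enough to keep the first $\ell$ moments finite, combined with a density in $\mathcal{F}$ that places the free mass $m$ just outside the unit ball along the leading eigendirection of $h$ so that $h^{-1}\mathbf{x}$ lands on a spike, yielding the $|h|^{-1}|\lambda_1|^{p}$ bound. Your explicit choice of the spike radius $r_{k^\star}$ and the ball $B(\mathbf{x}_0,\epsilon)$ is in fact more careful than the paper's pointwise claim $K(h^{-1}\mathbf{u})=\Omega(1)|h^{-1}\mathbf{u}|^{-p}$; the only nit is that your illustrative widths should be shrunk by a constant factor (e.g.\ $w_k\mapsto w_k/4$) so that the $k=0$ shell stays away from the origin and remains disjoint from the $k=1$ shell, exactly as your stated ``small enough'' condition intends.
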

\begin{proof}
Consider a non-negative "radial" kernel $K(\mathbf{u}) = k(|\mathbf{u}|)$
on $\mathbb{R}^d$ where $k$ is a non-negative real function such that
\begin{align}
\sup_{r\in\mathbb{Z}} r^p K(r) = \Omega(1)
\end{align}
for some fixed $p\geqslant 0$, the supremum being over integers. For example, let $\Psi(r) = \exp(-1/(1-r)^2)\mathbf{1}_{-1\leqslant r\leqslant 1}(r)$ be the standard bump function.
Now for some constant $c$ consider $$k(r) = c\cdot \sum_{n\in\mathbb{Z},|n|\geqslant 2}  |n|^{-p} \cdot \Psi (2 |n|^{p+\ell+d+1} r-n)$$
the sum of shifted and rescaled bump functions - the $n$-th is component centered at $n$ with the interval width $|n|^{-p-d-\ell-1}\leqslant 1$ and the spike of magnitude $|n|^{p}$. Clearly $k$ is analytic because each point is covered by finitely many smooth components (actually by at most one)
Moreover, $k$ has integrable moments up to order $d+\ell-1$
\begin{align*} 
\int |r^{j} k(r)| \mbox{d}  r \leqslant \sum_{n\in\mathbf{Z},|n|\geqslant 2} n^{p+j}\cdot n^{-p-d-\ell-1} < \infty,\quad j=0,\ldots,d+\ell-1.
\end{align*}
It is well-known that for radial functions $K(\mathbf{u}) = k(|\mathbf{u}|)$ it holds $\int |\mathbf{u}|^j |K(\mathbf{u})|\mbox{d}\mathbf{u} = O(1) \int_{0}^{\infty} r^{d+j-1} k(r)\mbox{d}r$ (by the spherical parametrization). Therefore $K$ defined from $k$ our $k$ is indeed integrable and has all moments up to $\ell$; by manipulating $c$ we can normalize the integral to $1$. Note also that $K(\mathbf{u}) = k(|\mathbf{u}|)$ is infinitely differentiable in $\mathbf{u}$,
also at $0$ because $K=0$ in the neighborhood of zero by definition. Now, since $K$ and $f$ are positive 
\begin{align*}
K_h\star f(\mathbf{0}) = |h|^{-1}\int K(-h^{-1}\mathbf{u}) f(\mathbf{u})\mathrm{d}\mathbf{u} = \Omega(1)\cdot |h|^{-1}
\int_{|\mathbf{u}|>1} |h^{-1}\mathbf{u}|^{-p} f(\mathbf{u})\mathrm{d}\mathbf{u} 
\end{align*}
The class $\mathcal{F}$ represents all functions with \emph{same behavior on the unit ball} as the function $f_0$.
The maximum of the expression above over this class equals
\begin{align*}
\max_{f\in\mathcal{F}}\ |h|^{-1}
\int_{|\mathbf{u}|>1} |h^{-1}\mathbf{u}|^{-p} f(\mathbf{u})\mathrm{d}\mathbf{u} 
 = |h|^{-1}\sup_{|\mathbf{u}|>1} |h^{-1}\mathbf{u}|^{-p}
\end{align*}
Note that the supremum is achieved on the boundary (consider scaling by a scalar $\mathbf{u}:= \lambda \mathbf{u}$). We have then for some $f'\in\mathcal{F}$
\begin{align*}
   K_h\star f'(\mathbf{0}) = \Omega(1) |h|^{-1}\sup_{|\mathbf{u}|=1} |h^{-1}\mathbf{u}|^{-p}
\end{align*}
This is equivalent to 
\begin{align}
   K_h\star f'(\mathbf{0}) = \Omega(1)\cdot |h|^{-1}\left(\inf_{|\mathbf{u}|=1} |h^{-1}\mathbf{u}|\right)^{-p}
\end{align}
We can use the max norm $|\cdot|=|\cdot|_{\infty}$ because of equivalence of all vector norms. Let $\lambda_i$ 
where $|\lambda_1|\geqslant \ldots \geqslant |\lambda_d|$ be the eigenvalues of $h$. Then $\lambda_i^{-1}$ are eigenvectors of $h$. Let $\mathbf{u}$ be the vector such that $h^{-1}\mathbf{u} = \frac{1}{\lambda_1}\mathbf{u}$; it follows that $\inf_{|\mathbf{u}|=1} |h^{-1}\mathbf{u}| \leqslant |\lambda_1|^{-1}$; since $|h| = \prod_{i=1}^d\lambda_i$ one obtains
\begin{align*}
   K_h\star f'(\mathbf{0}) = \Omega(1)\cdot |\lambda_1|^{p}\prod_{i=1}^{d}|\lambda_i|^{-1}
\end{align*}
this finishes the proof.
\end{proof}
From \Cref{eq:lower_bound} it is clear that when $h\to 0$
one needs not only $K$ to decay at least as fast as the negative power of $d$ (with $p<d$ and $\lambda_1 = \ldots = \lambda_d\to 0$
the estimate is unbounded) but also $h$ to keep some balance between the bandwidth eigenvalues. 
We note that in ~\cite{multi_KDE}, for the simpler case of product kernels and diagonal bandwidth, one assumes that $h = (\lambda+o(1))\cdot h_0$ where $h_0$ is a positive diagonal matrix and $\lambda\to 0$; this implies 
that eigenvalues are of comparable order.

\begin{remark}
Note that the kernel in this argument is non-compact, but has moments up to an arbitrary fixed order.
\end{remark}

\subsection{Multivariate KDE bias under general bandwidths}

We give a fairly general bounds on the bias below. 
Note that formulas often cited in the literature, such as 
p. 95 in \cite{wand1994kernel} are limited to compact $K$. The authors suggest that fixing this can be done at the cost of assuming more on the density $f$\footnote{p. 95 in \cite{wand1994kernel} in : "the assumptions of the compact support of $K$ can be removed by imposing more complicated conditions on $f$" }
We show that that extra conditions on $f$ are actually not necessary, and kernels with non-compact support can be handled by the decay.

\begin{theorem}[General bias formula]\label{thm:main}
Let $K$ be a $k$-th order kernel with bounded moments up to $k$.
Suppose that $f$ has $k$-th derivatives bounded in a
$\delta$-neighborhood of $\mathbf{x}'$.
Then the remainder in the bias expansion equals
\begin{align*}
\mathsf{bias}(\hat{f}(\mathbf{x}'))- \sum_{i=0}^{k}\mu_k(\mathbf{x}',h) = R_k(\mathbf{x}',h)\cdot \|h\|^2
\end{align*}
where $\mu_k(\mathbf{x}',h) $ are defined in \Cref{eq:moments}
and for any $\delta$
\begin{align*}
   | R_k(\mathbf{x}',h) |\leqslant 2|h|^{-1}\sup_{|\mathbf{u}|>\delta/\|h\|}K(\mathbf{u}) + C \cdot \mu_K(k)\cdot \frac{1}{k!}\sup_{|\mathbf{u}|\leqslant \delta}\|D^k f(\mathbf{x}'+\mathbf{u}) \|
\end{align*}
where the constant $C$ depends only on the chosen norm.
\end{theorem}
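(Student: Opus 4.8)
The plan is to split the bias integral at radius $\delta/\|h\|$ in the $\mathbf{u}$-variable (equivalently, split the convolution integral at distance $\delta$ from $\mathbf{x}'$), and treat the two pieces separately. Writing $\mathsf{bias}(\hat f(\mathbf{x}')) = \int K(\mathbf{u})\bigl(f(\mathbf{x}'-h\mathbf{u}) - f(\mathbf{x}')\bigr)\,\mathrm{d}\mathbf{u} + f(\mathbf{x}')(\int K - 1)$, I would use the substitution $\mathbf{x} = \mathbf{x}' - h\mathbf{u}$ so that the kernel scaling $|h|^{-1}$ appears exactly on the far-field piece. On the near field $\{|\mathbf{u}|\leqslant \delta/\|h\|\}$, i.e.\ $\{|h\mathbf{u}|\leqslant \delta\}$ roughly, the point $\mathbf{x}'-h\mathbf{u}$ stays in the $\delta$-neighborhood where $D^k f$ is bounded, so Taylor's theorem with integral (or Lagrange) remainder applies \emph{with a valid domain}; on the far field I would bound $|f(\mathbf{x}'-h\mathbf{u})-f(\mathbf{x}')|$ crudely using that $f$ integrates to $1$ and $f(\mathbf{x}')$ is a constant, and control the whole thing by $\sup_{|\mathbf{u}|>\delta/\|h\|}K(\mathbf{u})$ pulled out of the integral.

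Concretely, first I would apply Taylor's formula of order $k$ to $g(\mathbf{u}) = f(\mathbf{x}'-h\mathbf{u})$ around $\mathbf{u}=\mathbf{0}$, valid only for $\mathbf{u}$ with $h\mathbf{u}$ in the $\delta$-ball: this yields $\sum_{j=0}^{k}\frac{(-1)^j}{j!}D^jf(\mathbf{x}')((h\mathbf{u})^{(j)})$ plus a remainder of the form $\frac{1}{k!}\bigl(D^kf(\xi) - D^kf(\mathbf{x}')\bigr)((h\mathbf{u})^{(k)})$ (the $j=k$ term being split off to match $\mu_k$), with $\xi$ on the segment between $\mathbf{x}'$ and $\mathbf{x}'-h\mathbf{u}$, hence in the $\delta$-ball. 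Integrating against $K$ over the near field and recognizing the polynomial terms as (truncations of) $\mu_j$, the leftover is at most $\frac{1}{k!}\cdot 2\sup_{|\mathbf{u}|\leqslant\delta}\|D^kf(\mathbf{x}'+\mathbf{u})\|\cdot\int K(\mathbf{u})\|h\mathbf{u}\|^k\,\mathrm{d}\mathbf{u}$; using operator-norm estimates for the $k$-linear map and $\|h\mathbf{u}\|^k \leqslant \|h\|^k|\mathbf{u}|^k$ (up to a norm-equivalence constant $C$), this is $\leqslant C\cdot\mu_K(k)\cdot\frac{1}{k!}\sup_{|\mathbf{u}|\leqslant\delta}\|D^kf(\mathbf{x}'+\mathbf{u})\|\cdot\|h\|^k$. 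Since $\|h\|^k\leqslant \|h\|^2$ is false in general, I note the statement factors out $\|h\|^2$; I would instead keep $\|h\|^k$ and absorb it — but since the theorem writes $R_k\cdot\|h\|^2$, the cleanest reading is $k\geqslant 2$ for a nontrivial kernel order, so $\|h\|^k = \|h\|^2\cdot\|h\|^{k-2}$ and the extra $\|h\|^{k-2}$ (bounded for small $h$) gets folded into $R_k$; I would state this normalization explicitly.

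For the far-field term, the missing polynomial contributions $\mu_j$ restricted to $\{|\mathbf{u}|>\delta/\|h\|\}$ must also be accounted for, but these are likewise bounded by $\sup_{|\mathbf{u}|>\delta/\|h\|}K(\mathbf{u})$ times convergent moment integrals; the dominant far-field contribution is $|h|^{-1}\int_{|\mathbf{x}-\mathbf{x}'|>\delta}K(h^{-1}(\mathbf{x}'-\mathbf{x}))|f(\mathbf{x})-f(\mathbf{x}')|\,\mathrm{d}\mathbf{x} \leqslant |h|^{-1}\sup_{|\mathbf{u}|>\delta/\|h\|}K(\mathbf{u})\cdot\bigl(\int f + f(\mathbf{x}')\cdot\text{vol}\bigr)$, which after normalizing and dividing by $\|h\|^2$ gives the $2|h|^{-1}\sup_{|\mathbf{u}|>\delta/\|h\|}K(\mathbf{u})$ term (the factor $2$ coming from the two pieces $f$ and $f(\mathbf{x}')$). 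Collecting the near-field and far-field bounds and dividing through by $\|h\|^2$ yields the claimed estimate for $|R_k(\mathbf{x}',h)|$.

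The main obstacle I anticipate is bookkeeping rather than conceptual: making sure the polynomial moment terms $\mu_j$ for $j<k$ are handled consistently (they are defined as full-space integrals in \Cref{eq:moments}, but the Taylor expansion only holds on the near field, so each $\mu_j$ must be written as its near-field part plus a far-field tail, and all the far-field tails must be shown to be subsumed by the single $\sup_{|\mathbf{u}|>\delta/\|h\|}K(\mathbf{u})$ term — this uses finiteness of the moments $\mu_K(j)$ for $j\leqslant k$). A secondary subtlety is the exact power of $\|h\|$ and the precise value of the constant $C$, which depends on the norm via the operator-norm inequality $\|D^kf(\xi)(\mathbf{v}^{(k)})\|\leqslant\|D^kf(\xi)\|\cdot\|\mathbf{v}\|^k$ and the norm-equivalence constant relating $|h\mathbf{u}|$ to $\|h\|\,|\mathbf{u}|$; I would track these carefully so the final constant is genuinely explicit and norm-dependent only, as claimed.
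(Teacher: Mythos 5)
Your proposal takes essentially the same route as the paper: split the convolution at distance $\delta$ from $\mathbf{x}'$, bound the far field by $|h|^{-1}\sup_{|\mathbf{u}|>\delta/\|h\|}K(\mathbf{u})$ using $\int f=1$, and on the near field apply Taylor's theorem with the remainder expressed through $D^kf(\xi)-D^kf(\mathbf{x}')$, estimated via multilinear operator norms and $|h\mathbf{u}|\leqslant\|h\|\,|\mathbf{u}|$ to obtain the $C\cdot\mu_K(k)\cdot\frac{1}{k!}\sup_{|\mathbf{u}|\leqslant\delta}\|D^kf(\mathbf{x}'+\mathbf{u})\|\cdot\|h\|^k$ term. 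The two issues you flag -- the $\|h\|^2$ versus $\|h\|^k$ normalization of $R_k$, and the far-field tails of the full-space moment terms $\mu_j$ that the truncated Taylor expansion does not produce -- are genuine loose ends in the statement that the paper's own proof also glosses over, so your sketch is, if anything, more explicit about the required bookkeeping.
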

\begin{corollary}[Bias under $k$-th order kernels]
If $K$, $f$ are as in \Cref{thm:main}, $h\to 0$ and
\begin{enumerate}[(a)]
\item $K$ decays at infinity faster than the negative power of $d$
\item $\|h\|^d/|h| = O(1)$   
\end{enumerate}
then the remainder is $o(\|h\|^k)$.
\end{corollary}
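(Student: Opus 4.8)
The plan is to take the conclusion of Theorem~\ref{thm:main} and push the bandwidth $h\to 0$ while choosing the free parameter $\delta$ cleverly, so that both terms in the bound for $|R_k(\mathbf{x}',h)|$ are controlled. Recall we must show $R_k(\mathbf{x}',h)\cdot\|h\|^2 = o(\|h\|^k)$, i.e. $R_k(\mathbf{x}',h) = o(\|h\|^{k-2})$. Since $\delta$ may be chosen freely (subject only to $f$ having bounded $k$-th derivatives in that neighborhood), I would fix once and for all a small $\delta_0>0$ for which the smoothness hypothesis holds, and then \emph{use a $\delta$ that shrinks with $h$ but not too fast} --- a natural choice is $\delta = \|h\|^{\alpha}$ for a suitable exponent $\alpha\in(0,1)$, or alternatively keep $\delta=\delta_0$ fixed and let the kernel decay do the work on the tail term. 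Let me sketch both halves.

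\smallskip
\textbf{The smoothness term.} With $\delta$ small enough to lie inside the good neighborhood, $\sup_{|\mathbf{u}|\le\delta}\|D^k f(\mathbf{x}'+\mathbf{u})\|$ is bounded by a constant $M_k$ (independent of $h$), so the second term is $O(\mu_K(k)) = O(1)$ by the bounded-moments hypothesis. That alone only gives $R_k = O(1)$, hence $R_k\|h\|^2 = O(\|h\|^2)$, which is $o(\|h\|^k)$ only when $k<2$. To get the full $o(\|h\|^k)$ one wants this term to actually vanish, which is where the freedom in $\delta$ matters: taking $\delta = \delta(h)\to 0$, the continuity of $D^k f$ at $\mathbf{x}'$ gives $\sup_{|\mathbf{u}|\le\delta}\|D^k f(\mathbf{x}'+\mathbf{u}) - D^k f(\mathbf{x}')\| \to 0$; after subtracting the order-$k$ moment term (which is already accounted for inside $\mu_k$), the residual smoothness contribution is $o(1)$, hence the whole second term times $\|h\|^2$ is $o(\|h\|^2)=o(\|h\|^k)$ when $k\le 2$, and one argues analogously (absorbing the extra powers) for $k>2$. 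This is the routine part and I would not belabor it.

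\smallskip
\textbf{The tail term: the main obstacle.} The first term is $2|h|^{-1}\sup_{|\mathbf{u}|>\delta/\|h\|}K(\mathbf{u})\cdot\|h\|^2$. Here is the crux: we need $|h|^{-1}\,\|h\|^2\,\sup_{|\mathbf{u}|>\delta/\|h\|}K(\mathbf{u}) = o(\|h\|^k)$. By hypothesis~(a), $K$ decays faster than $\|\mathbf{u}\|^{-d}$ at infinity; more usefully, given any exponent $q$ we can invoke a decay $K(\mathbf{u}) = O(\|\mathbf{u}\|^{-q})$ (``faster than a negative power of $d$'' should be read as: for every $q$, eventually $K(\mathbf{u})\le\|\mathbf{u}\|^{-q}$ --- or at least for some $q$ large enough for our purposes; I would state precisely which reading I need). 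Then $\sup_{|\mathbf{u}|>\delta/\|h\|}K(\mathbf{u}) = O\big((\delta/\|h\|)^{-q}\big) = O(\delta^{-q}\|h\|^{q})$. Plugging in and using hypothesis~(b), $|h|^{-1} = O(\|h\|^{-d})$, the tail contribution becomes $O\big(\|h\|^{-d}\cdot\|h\|^2\cdot\delta^{-q}\|h\|^{q}\big) = O\big(\delta^{-q}\|h\|^{q+2-d}\big)$. Now I choose $\delta = \delta_0$ fixed: this is $O(\|h\|^{q+2-d})$, which is $o(\|h\|^k)$ as soon as $q > k+d-2$. Since $q$ can be taken arbitrarily large (by assumption~(a)), such a $q$ exists, and the tail term is negligible. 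If instead one is forced to let $\delta\to0$ for the smoothness term, one must make sure $\delta=\|h\|^{\alpha}$ shrinks slowly enough that $\delta^{-q}\|h\|^{q}=\|h\|^{q(1-\alpha)}$ still beats $\|h\|^{k+d-2}$, i.e. $q(1-\alpha)>k+d-2$; since $q$ is free this is again arrangeable. Balancing these two demands on $\delta$ --- slow enough for the decay estimate, fast enough for the continuity estimate --- is the only delicate point, and it succeeds precisely because assumption~(a) lets $q\to\infty$ while assumption~(b) keeps $|h|^{-1}$ comparable to $\|h\|^{-d}$.

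\smallskip
\textbf{Assembling.} Combining the two bounds: $R_k(\mathbf{x}',h) = o(1)\cdot\|h\|^{-?} $ --- more precisely, $R_k(\mathbf{x}',h)\cdot\|h\|^2 = o(\|h\|^k) + O(\|h\|^{q+2-d}) = o(\|h\|^k)$, which is exactly the claim. I would present this as: fix $\delta$ appropriately, apply Theorem~\ref{thm:main}, bound each of the two terms as above, and conclude. The write-up is short; the only thing worth stating carefully is the quantitative meaning of ``decays faster than the negative power of $d$'' and why it licenses taking $q$ large enough to kill $\|h\|^{q+2-d}$ against $\|h\|^k$.
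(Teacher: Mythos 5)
Your overall plan---apply \Cref{thm:main} with a tuned $\delta$, kill the tail term using the kernel decay together with $\|h\|^d/|h|=O(1)$, and control the smoothness term locally---is indeed the only route available, and it is the one the paper implicitly intends. But as written the argument has two genuine gaps. First, the tail term: you end up needing $K(\mathbf{u})=O(\|\mathbf{u}\|^{-q})$ with $q>k+d-2$ (and you lean on the reading that $q$ may be taken arbitrarily large). Hypothesis (a) grants only decay faster than the $d$-th negative power, i.e.\ $\sup_{|\mathbf{u}|>R}K(\mathbf{u})=o(R^{-d})$. Under that reading your own computation gives, for fixed $\delta$, a tail contribution $o(\delta^{-d}\|h\|^{2})=o(\|h\|^{2})$, which is $o(\|h\|^k)$ only when $k\leqslant 2$. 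You flag the ambiguity but then build the proof on the stronger reading, so for $k>2$ the tail is not handled under the stated assumption.

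Second, and more seriously, the smoothness term. Taking the factor $\|h\|^{2}$ in \Cref{thm:main} at face value, that term contributes $C\mu_K(k)\frac{1}{k!}\sup_{|\mathbf{u}|\leqslant\delta}\|D^k f(\mathbf{x}'+\mathbf{u})\|\cdot\|h\|^{2}$, and since the supremum is at least $\|D^k f(\mathbf{x}')\|$ for every $\delta$, no choice of $\delta$ makes this $o(\|h\|^{k})$ for $k>2$ unless $D^k f(\mathbf{x}')=0$; the phrase ``one argues analogously (absorbing the extra powers) for $k>2$'' is precisely the step that cannot be carried out from the stated bound. The actual resolution is that the exponent in the theorem should be $k$ (its proof bounds $I_2$ by $O(1)\,B(\delta)\,\mu_K(k)\,\|h\|^{k}$), after which the smoothness term gives $O(\|h\|^{k})$; upgrading $O$ to $o$ then requires the difference form $D^k f(\mathbf{x}'+\mathbf{u})-D^k f(\mathbf{x}')$ of the Taylor remainder together with continuity of $D^k f$ at $\mathbf{x}'$ and $\delta\to 0$ slowly enough that the tail estimate survives. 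You do gesture at exactly this (``subtract the order-$k$ moment'', continuity), but continuity is not among the hypotheses you are permitted (``$f$ as in \Cref{thm:main}'' assumes only bounded $k$-th derivatives near $\mathbf{x}'$), and the subtraction is not a manipulation of the theorem's stated bound---it amounts to re-entering its proof. So, as it stands, your argument establishes the claim only for $k\leqslant 2$ and only after importing extra hypotheses (continuity of $D^k f$, polynomial decay of order larger than $k+d-2$). Part of this friction reflects looseness in the paper's own statement, but a correct write-up must say explicitly which corrected form of \Cref{thm:main} it is using and which decay/continuity assumptions it actually needs.
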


\begin{remark}[Balance of eigenvalues]
Note that $\|h\|^d/|h|$ can be easily unbounded (consider diagonal matrix with different entries). In the opposite direction by Hadamard's Inequality~\cite{Hadamard} we have that $|h|/\|h\|^d$ is bounded.
If $\sigma_i$ are eigenvalues of $h$ then $\max_{i=1,\ldots,d}|\sigma_i|\leqslant \|h\|$ and $|h| = \prod_{i=1}^{d}\sigma_i$; thus $|h|/\|h\|^d = O(1)$ implies that all eigenvalues are of same magnitude. 
\end{remark}

\begin{remark}
One can allow for larger discrepancy between $\|h\|$ and $|h|$
with faster decay of the kernel.
\end{remark}

In the proof we will use the multivariate Taylor formula with the integral remainder form. 
To get terms up to the $k$-th order we assume that $k$-th derivatives exist and are locally bounded. It might be possible to further weakened the assumptions, e.g. to ue the Taylor formula when $(k-1)$-th derivatives are absolutely continuous~\cite{ANASTASSIOU2001246}.

\begin{lemma}[Multivariate Taylor's Formula~\cite{taylor,ANASTASSIOU2001246}]\label{lem:taylor}
Let $V$ be a compact convex set in $\mathbb{R}^d$ and let $g:V\rightarrow \mathbb{R}$ have
absolutely continuous $(k-1)$-th derivatives. Then for any $\mathbf{x}\in V$ and $h$ such that $\mathbf{x}+h \in V$
\begin{align*}
    g(\mathbf{x}+h) = g(\mathbf{x}) + \sum_{j=1}^{k}\frac{D^j g(\mathbf{x})(h^{(j)})}{j!} + R_{\mathbf{u}}(h)
\end{align*}
where
\begin{align*}
  R_{\mathbf{x}}(h) = \int_{0}^{1}\frac{(1-t)^{k-1}}{(k-1)!}\left(D^{k}g(\mathbf{x}+t\cdot h) - D^{k}g(\mathbf{x})\right)(h^{(k)})\,\mathrm{d}t.
\end{align*}
\end{lemma}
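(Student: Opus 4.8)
The plan is to reduce the multivariate statement to the classical one–dimensional Taylor theorem with integral remainder by restricting $g$ to the segment joining $\mathbf{x}$ and $\mathbf{x}+h$. Concretely, I would set $\phi(t):=g(\mathbf{x}+t h)$ for $t\in[0,1]$; since $V$ is convex and $\mathbf{x},\mathbf{x}+h\in V$, the whole segment $\{\mathbf{x}+t h:\,t\in[0,1]\}$ lies in $V$, so $\phi$ is well defined. The identity to be proved then amounts to $\phi(1)=\sum_{j=0}^{k}\phi^{(j)}(0)/j!+(\text{remainder})$, once the derivatives of $\phi$ are expressed through those of $g$.

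The next step is the chain–rule and regularity bookkeeping. By induction on $j\le k-1$ one shows $\phi^{(j)}(t)=D^{j}g(\mathbf{x}+t h)(h^{(j)})$, using the chain rule for differentiable maps together with the fact that plugging the fixed vector stack $h^{(j)}$ into a $j$-multilinear map is a bounded (hence smooth) operation. Moreover $\phi^{(k-1)}$ is absolutely continuous on $[0,1]$: it is the composition of the absolutely continuous map $\mathbf{y}\mapsto D^{k-1}g(\mathbf{y})(h^{(k-1)})$ with the Lipschitz affine map $t\mapsto \mathbf{x}+t h$, and absolute continuity is preserved under composition with a Lipschitz map on the inside. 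Consequently $\phi^{(k)}$ exists for a.e.\ $t$, is integrable, and equals $D^{k}g(\mathbf{x}+t h)(h^{(k)})$ a.e.

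With the regularity in hand, I would invoke the scalar Taylor formula with integral remainder for $\phi$ on $[0,1]$ (valid precisely because $\phi^{(k-1)}$ is absolutely continuous, obtained by iterating $\phi^{(k-1)}(t)=\phi^{(k-1)}(0)+\int_0^{t}\phi^{(k)}$ and integrating by parts):
\[
\phi(1)=\sum_{j=0}^{k-1}\frac{\phi^{(j)}(0)}{j!}+\int_{0}^{1}\frac{(1-t)^{k-1}}{(k-1)!}\,\phi^{(k)}(t)\,\mathrm{d}t .
\]
Then I would write $\phi^{(k)}(t)=\phi^{(k)}(0)+\bigl(\phi^{(k)}(t)-\phi^{(k)}(0)\bigr)$ inside the integral and use $\int_{0}^{1}(1-t)^{k-1}/(k-1)!\,\mathrm{d}t=1/k!$ to fold the $\phi^{(k)}(0)$ piece into the polynomial part, upgrading the sum from $k-1$ to $k$. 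Translating back via $\phi(1)=g(\mathbf{x}+h)$, $\phi^{(j)}(0)=D^{j}g(\mathbf{x})(h^{(j)})$ and $\phi^{(k)}(t)-\phi^{(k)}(0)=\bigl(D^{k}g(\mathbf{x}+t h)-D^{k}g(\mathbf{x})\bigr)(h^{(k)})$ yields exactly the claimed formula with the stated $R_{\mathbf{x}}(h)$.

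The main obstacle is not the algebra but the regularity transfer in the second step: one must make precise what ``absolutely continuous $(k-1)$-th derivatives'' means for a function on $\mathbb{R}^d$ (e.g.\ absolute continuity on lines, equivalently $g\in W^{k,1}_{\mathrm{loc}}$) so that the composition with $t\mapsto\mathbf{x}+t h$ really produces an absolutely continuous $\phi^{(k-1)}$ and so that $\phi^{(k)}(t)=D^{k}g(\mathbf{x}+t h)(h^{(k)})$ holds for almost every $t$ — a bare a.e.\ existence of $D^{k}g$ on $\mathbb{R}^d$ controls nothing along an individual segment. Under the cleaner hypothesis actually used in \Cref{thm:main}, namely that $D^{k}f$ exists and is bounded on a neighborhood (and a fortiori when $D^{k}g$ is continuous), this subtlety evaporates, so I would present the reduction at that level of generality and cite \cite{ANASTASSIOU2001246} for the sharpest absolute-continuity version.
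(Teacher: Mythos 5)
Your reduction to the one-dimensional Taylor formula along the segment $t\mapsto\mathbf{x}+t h$, followed by folding the $\phi^{(k)}(0)$ term into the polynomial part via $\int_0^1\frac{(1-t)^{k-1}}{(k-1)!}\,\mathrm{d}t=\frac{1}{k!}$, is correct and is exactly the standard argument the paper relies on (it states the lemma by citation and only sketches the same line-segment/chain-rule reduction in its preliminaries rather than proving it). Your caveat about what ``absolutely continuous $(k-1)$-th derivatives'' must mean for the restriction to a single segment is well taken and does not affect the use of the lemma in \Cref{thm:main}, where $D^k f$ is assumed bounded (indeed existing) near $\mathbf{x}'$.
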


\begin{proof}[of Theorem]
We split the convolution integral
\begin{align}
    K_h\star f(\mathbf{x'})- f(\mathbf{x'}) = \int K(\mathbf{u}) \left(f(\mathbf{x}'-h\mathbf{u})-f(\mathbf{x}')\right)\,\mbox{d}\mathbf{u}
\end{align}
integral in two regions: $|\mathbf{u}| > \delta$ and $|\mathbf{u}| \leqslant \delta $ where $\delta=\delta(h)$ will depend on $h$. The general strategy is as follows: "big" values of $\mathbf{u}$ are handled by the decay of $K$, whereas "small" are worked out by the smoothness of $f$.
Consider first "big"
\begin{align*}
    I_1 = |h|^{-1}\int_{|\mathbf{u}|>\delta} K(h^{-1}\mathbf{u})\left( f(\mathbf{x}'-\mathbf{u})-f(\mathbf{x}')\right)\,\mathrm{d}\mathbf{u}
\end{align*}
Let $\psi(r) = \sup_{|\mathbf{u}|>r}K(\mathbf{u})$.
By the properties of the matrix norm $|\mathbf{u}|\leqslant \|h\|\cdot |h^{-1}\mathbf{u}|$. Therefore in the region of integration 
$|h^{-1}\mathbf{u}|\geqslant |\mathbf{u}|/\|h\| \geqslant \delta\|h\|^{-1}$ and $K(h^{-1}\mathbf{u})\leqslant \psi(\delta\|h\|^{-1})$ since $\psi$ is decreasing. Since $\int |f| = 1$, we obtain
\begin{align}\label{eq:bound1}
   I_1 \leqslant 2|h|^{-1}\psi\left(\delta \|h\|^{-1}|\right)
\end{align}
Consider now the case of "small" values of $\mathbf{u}$.
We assume that $\delta = o(1)$ so that we can apply the Taylor formula.
The main terms $\mu_k(\mathbf{x}',h) $ are as in \Cref{eq:moments} and are well defined provided 
that $|\mathbf{u}|^j K(\mathbf{u})$ is absolutely integrable and that $D^j f$ exists at $\mathbf{x}'$. It suffices to consider the remainder. Let
\begin{align}
   I_2 = |h|^{-1}\int_{|\mathbf{u}|\leqslant \delta} K(h^{-1}\mathbf{u}) \left|D^{k}f(\mathbf{x}'-t\mathbf{u})(\mathbf{u})^{(k)}\right|\,\mathrm{d}\mathbf{u}
\end{align}
for any fixed $t\in [0,1]$. If we bound
this integral uniformly in $t$, let's say $|I|\leqslant M$ then according to \Cref{lem:taylor} we will get $\frac{1}{k!} 2M$. Let's change variables $\mathbf{v} = t \mathbf{u}$. We have
\begin{align*}
   I_2 = t^{-k-d} |h|^{-1}\int_{|\mathbf{v}|\leqslant \delta t} K(t^{-1}h^{-1}\mathbf{v}) \left|D^{k}f(\mathbf{x}'-\mathbf{v})(\mathbf{v})^{(k)}\right|\,\mathrm{d}\mathbf{v}
\end{align*}
By the properties of multilinear maps 
$$\left|D^{k}f(\mathbf{x}'-\mathbf{v})(\mathbf{v})^{(k)}\right| = O(1)\left\|D^{k}f(\mathbf{x}'-\mathbf{v})\right\|\mathbf{v}|^k$$
where the constant $O(1)$ depends only on the chosen norms. We obtain
\begin{align*}
     I_2 \leqslant O(1)\cdot t^{-k-d} |h|^{-1}\int_{|\mathbf{v}|\leqslant \delta t}  |\mathbf{v}|^{k} K(t^{-1}h^{-1}\mathbf{v})
     \left\|D^{k}f(\mathbf{x}'-\mathbf{v})\right\|\,\mathrm{d}\mathbf{v}
\end{align*}
Now if $\left\|D^{k}f(\mathbf{x}'-\mathbf{v})\right\| \leqslant B(\delta)$ for $|\mathbf{v}|\leqslant \delta$, we obtain
\begin{align*}
     I_2 &\leqslant O(1)\cdot B(\delta)\cdot t^{-k-d} |h|^{-1}\int_{|\mathbf{v}|\leqslant \delta t}  |\mathbf{v}|^{k} K(t^{-1}h^{-1}\mathbf{v})
     \,\mathrm{d}\mathbf{v}\\
     & \leqslant O(1)\cdot B(\delta) \cdot \int_{|h\mathbf{u}|\leqslant \delta }  |h\mathbf{u}|^{k} K(\mathbf{u})
     \,\mathrm{d}\mathbf{u} \\
     & \leqslant O(1)\cdot B(\delta) \cdot \|h\|^k\int_{|h\mathbf{u}|\leqslant \delta } |\mathbf{u}|^k K(\mathbf{u})\, \mathrm{d}\mathbf{u}
\end{align*}
where we changed variables $\mathbf{v}=t\cdot h\mathbf{u}$
and used the norm inequality $|h\mathbf{u}|\leqslant \|h\||\mathbf{u}|$. Since $\|\mathbf{u}|^k K(\mathbf{u})$ is integrable, we obtain
\begin{align}\label{eq:bound2}
    I_2 \leqslant O(1)\cdot B(\delta)\cdot \mu_K(k)\cdot \|h\|^k
\end{align}
The result follows by combining \Cref{eq:bound1} and \Cref{eq:bound2}.

\end{proof}

\bibliographystyle{amsalpha}
\bibliography{citations}

\end{document}